\newtheorem{Counter}{!!!!!!!}
\newtheorem{Thm}[Counter]{Theorem}
\newtheorem{Lem}[Counter]{Lemma}
\begin{document}

\title {The optimal free knot spline approximation of stochastic differential equations with additive noise}

\author{
Mehdi Slassi\thanks{Fachbereich Mathematik, Technische Universit\"at
Darmstadt, Schlo\ss gartenstra\ss e 7, 64289 Darmstadt, Germany}
}

\date{\today}

\maketitle

\begin{abstract}
In this paper we analyse the pathwise approximation of stochastic differential equations by polynomial splines with free knots. The pathwise distance between the solution and its approximation is measured globally on the unit interval in the $L_{\infty}$-norm, and we study the expectation of this distance. For equations with additive noise we obtain sharp lower and upper bounds for the minimal error in the class of arbitrary spline approximation methods, which use $k$ free knots. 
The optimal order is achieved by an approximation
method $\widehat{X}_{k}^{\dagger}$, which combines an Euler scheme on a coarse grid with an optimal
spline approximation of the Brownian motion $W$  with $k$
free knots.

\medskip

\noindent
{\bf Keywords:}
 Stochastic differential equation; Pathwise uniform approximation; Spline approximation; Free knots
\end{abstract}
\section{Introduction }

\let\languagename\relax 

Consider a scalar stochastic differential equation (SDE) with additive noise
\begin{equation}
dX\left(t\right)=a\left(t,X\left(t\right)\right)dt+\sigma\left(t\right)dW\left(t\right),\qquad t\in \left[0,1\right],\label{eq1}\end{equation}
with initial value $X(0)$. Here $W=\left(W\left(t\right)\right)_{t\geq 0}$
denotes a one-dimensional Brownian motion on a probability space $\left(\Omega,\mathcal{F},\mathbb{P}\right)$. We study pathwise approximation of equation (\ref{eq1}) on the unit interval by polynomial splines with free knots.

Let $X$ and $\widehat{X}$ denote the strong solution and an approximate solution on $\left[0,1\right]$, respectively. For the pathwise error we consider the distance
in $L_{\infty}$-norm\[
\bigl\Vert X-\widehat{X}\bigr\Vert_{L_{\infty}\left[0,1\right]}=\sup_{0\leq t\leq1}\bigl|X\left(t\right)-\widehat{X}\left(t\right)\bigr|,\]
 and we define the error $e_{q}\bigl(\widehat{X}\bigr)$ of the approximation
$\widehat{X}$ by averaging over all trajectories, i.e., \begin{equation}
e_{q}\bigl(\widehat{X}\bigr)=\left(E^{*}\bigl\Vert X-\widehat{X}\bigr\Vert_{L_{\infty}\left[0,1\right]}^{q}\right)^{1/q},\qquad  1\leq q<\infty.\label{eq2}\end{equation}
Here we use the outer expectation value $E^{*}$ in order to avoid
cumbersome measurability considerations. The reader is referred to
\cite{WellnerVanderVaart:96} for a detailed study of the outer integral
and expectation.
In the sequel, for two sequences $\left(a_{k}\right)_{k\in\mathbb{N}}$
and $\left(b_{k}\right)_{k\in\mathbb{N}}$ of positive real numbers
we write $a_{k}\approx b_{k}$ if $\lim_{k\to\infty}a_{k}/b_{k}=1$ 
and $a_{k}\gtrsim b_{k}$ if $\liminf_{k\to\infty}a_{k}/b_{k}\geq1$.
Additionally $a_{k}\asymp b_{k}$ means $C_{1}\leq a_{k}/b_{k}\leq C_{2}$
for all $k\in\mathbb{N}$ and some positive constants $C_{i}$.

Typically, piecewise linear functions with fixed knots or with sequential selection of knots are used to approximate the solution of SDEs globally on a time interval, and approximations of this kind are considered in the present paper as particular cases, too. 

For $k\in \mathbb{N}$  we use $\widehat{X}_{k}^{e}$  to denote the piecewise interpolated Euler scheme with constant step-size  $1/k$. In \cite{HofmannGronbachRitter:00}  Hofmann et al. 
have determined the strong asymptotic behaviour of $e_{q}\left(\widehat{X}_{k}^{e}\right)$
with an explicitly given constant, namely
\begin{equation}
e_{q}\left(\widehat{X}_{k}^{e}\right)\approx \frac{C_{e}}{\sqrt{2}}\cdot\left(\ln k/k\right)^{1/2}\label{order}
\end{equation}
with \[
C_{e}=\left\Vert \sigma\right\Vert _{L_{\infty}\left[0,1\right]},\]
where $\left\Vert \sigma\right\Vert _{L_{\infty}\left[0,1\right]}=\sup_{t\in\left[0,1\right]}\left|\sigma\left(t\right)\right|.$ Note that the upper
bound in (\ref{order}) has first been given in \cite{Faure:01} with an unspecified constant.

Now, we recall known results concerning the approximations that are based on a sequential
selection of knots to evaluate $W$, see \cite{HofmannGronbachRitter:00, Gronbach:02}
for a formal definition of such methods. This includes numerical methods with adaptive step size control. In \cite{HofmannGronbachRitter:00} Hofmann et al. show that a step size proportional to the inverse of the current value of $\sigma^{2}$  leads to an asymptotically optimal method $\widehat{X}_{k}^{a}$, more precisely 
\begin{equation}
e_{q}\left(\widehat{X}_{k}^{a}\right)\approx \frac{C_{a}}{\sqrt{2}}\cdot\left(\ln k/k\right)^{1/2}\label{klaus1}
\end{equation}
 and \[
C_{a}=\left\Vert \sigma\right\Vert _{2},\]
where $\left\Vert \sigma\right\Vert _{2}=\left(\int_{0}^{1}\left(\sigma\left(t\right)\right)^{2}\, dt\right)^{1/2}.$
Moreover, they establish strong asymptotic optimality of the sequence
$\widehat{X}_{k}^{a}$, i.e., for every sequence of methods $\widehat{X}_{k}$
that use $k$ sequential observations of $W$ \begin{equation}
e_{q}\left(\widehat{X}_{k}\right)\gtrsim \frac{C_{a}}{\sqrt{2}}\cdot\left(\ln k/k\right)^{1/2}.\label{klaus2}\end{equation}
Typically, $C_{a}<C_{e}$ and $C_{a}>0$, so that the convergence order $ \left(\ln k/k\right)^{1/2}$ cannot be improved by sequential observation of $W.$ A generalization of the results (\ref{order}), (\ref{klaus1}) and (\ref{klaus2}) to the case of systems of equations with multiplicative noise has been achieved in \cite{Gronbach:02}.

In the present paper we do not impose any restriction on the selection
of the knots.

For $k\in\mathbb{N}$ and $r\in\mathbb{N}_{0}$ we let $\Pi_{r}$
denote the set of polynomials of degree at most $r$, and we consider
the space $\Phi_{k,r}$ of polynomial splines $\varphi$ of degree
at most $r$ with $k-1$ free knots, i.e., \[
\varphi=\sum_{j=1}^{k}\mathbf{1}_{\left]t_{j-1},\; t_{j}\right]}\cdot\pi_{j},\]
where $0=t_{0}<\cdots<t_{k}=1$ and $\pi_{1},\ldots,\pi_{k}\in\Pi_{r}$. Note that the spline $\varphi$ uses $k+1$ knots, whereof $k-1$ can be chosen freely.
Then, any approximation method $\widehat{X}_{k}$ by splines with
$k-1$ free knots can be thought of as a mapping \[
\widehat{X}_{k}\;:\Omega\longrightarrow\Phi_{k,r},\]
and we denote this class of mappings by $\mathfrak{N}_{k,r}$.

Furthermore, we define the minimal error
\begin{equation} \label{rev1}
e_{k,q}^{\min}\left(X\right) = \inf \{e_{q}\bigl(\widehat{X}_{k}\bigr) : \widehat{X}_{k} \in \mathfrak{N}_{k,r} \},
\end{equation} 
i.e., the $q$-average $L_{\infty}$-distance of the solution $X$ to the spline space  $\Phi_{k,r}$. We shall study the strong asymptotic behaviour of $e_{k,q}^{\min}\left(X\right)$ as $k$ tends to infinity.

Note that spline approximation with free knots is a nonlinear approximation problem in the sense that the approximants do not come from linear spaces but rather from nonlinear manifolds $\Phi_{k,r}$. Nonlinear approximation
for deterministic functions has been extensively studied in the literature, see \cite{Devore:98}
for a survey. In the context of stochastic processes much less is
known, and we refer the reader to \cite{CohenD'Ales:97, CohenDaubechiesGuleryuz:02, CreutzigGronbachRitter:07, konPlaskota:05, Slassi:12}. At first in \cite{konPlaskota:05} and thereafter in \cite{CreutzigGronbachRitter:07, Slassi:12}
approximation by splines with free knots is studied, while wavelet       
methods are employed in \cite{CohenD'Ales:97,CohenDaubechiesGuleryuz:02}.

From Creutzig et al. \cite{CreutzigGronbachRitter:07} we know, that\begin{equation}
e_{k,q}^{\min}\left(X\right)\asymp\left(1/k\right)^{1/2}.\label{klaus3}\end{equation}
Hence free knot spline approximation yields a better rate of convergence than (\ref{order}) and (\ref{klaus1}). We add, that the same order
of convergence is achieved by the average Kolmogorov widths, see \cite{Creutig:02,Mairove:93,Mairove:96},
but asymptotically optimal subspaces seem to be unknown.

In \cite{Slassi:12} we analyse an approximation method $\widehat{X}_{k}^{*}$, which achieves the convergence order $1/\sqrt{k}$. The method $\widehat{X}_{k}^{*}$ combines a Milstein scheme on a coarse grid with an optimal spline approximation of the Brownian motion $W$. The approximation method  $\widehat{X}_{k}^{*}$ basically works in two steps. First, we take the Milstein scheme to estimate the drift and diffusion coefficients at equidistant discrete points $t_{\ell}$. At the second stage we piecewise freeze the drift and diffusion coefficients and we consider on each subinterval $\left[t_{\ell-1},t_{\ell}\right]$  the asymptotically optimal spline approximation of the Brownian motion $W\left(t\right)-W\left(t_{\ell}\right)$ with equal number of free knots fixed a priori. For adaptive step size
control a similar idea has been used in \cite{HofmannGronbachRitter:01}. In the particular case of SDEs with additive noise, we show that the error of $\widehat{X}_{k}^{*}$ satisfies 
\begin{equation}\label{Tina0}
e_{q}\bigl(\widehat{X}_{k}^{*}\bigr)\approx\left(E\left(\tau_{1,1}\right)\right)^{-1/2}\cdot C_{e}\cdot\left(1/k\right)^{1/2},
\end{equation}
where 
$$\tau_{1,1}=\inf\Bigl\{ t>0\;\mid\;\inf_{\pi\in\Pi_{r}}\left\Vert W-\pi\right\Vert _{L_{\infty}\left[0,t\right]}>1\Bigr\}.$$
Hence the stopping time $\tau_{1,1}$ yields the maximal length of 
a subinterval $\left[0,t\right]$ that permits best approximation of $W$ 
by polynomials of degree at most $r$ with error at most one.  

In order to improve the asymptotic constant in (\ref{Tina0}) we introduce in the present paper an approximation method  $\widehat{X}_{k}^{\dagger}$. The method $\widehat{X}_{k}^{\dagger}$  is defined in the same way as $\widehat{X}_{k}^{*}$,  where the number of free knots used in each subinterval $\left[t_{\ell-1},t_{\ell}\right]$ is roughly  proportional to $\left(\sigma\left(t_{\ell-1}\right)\right)^{2}$ .
For the error of $\widehat{X}_{k}^{\dagger}$  we establish the strong asymptotic behaviour with an explicitly given constant, namely
\begin{equation}
 e_{q}\left(\widehat{X}_{k}^{\dagger}\right)\approx\left(E\left(\tau_{1,1}\right)\right)^{-1/2}\cdot C_{a} \cdot\left(1/k\right)^{1/2}. \label{Tina2} 
\end{equation}
Note that the new approximation performs asymptotically better than the approximation  $\widehat{X}_{k}^{*}$ in many cases.

In \cite{CreutzigGronbachRitter:07} the lower and  upper bound in (\ref{klaus3})
are proven non-constructively and the method of proof does not allow
to control asymptotic constants. In this paper we wish to find sharp lower and upper bounds for the minimal error (\ref{rev1}) for SDEs with additive noise. We show that the minimal errors satisfy 
 \begin{equation} e_{k,q}^{\min}\left(X\right)\approx\left(E\left(\tau_{1,1}\right)\right)^{-1/2}\cdot C_{a}\cdot\left(1/k\right)^{1/2}. \label{Tina4} 
\end{equation}
We note that the order of convergence in (\ref{Tina2}) and (\ref{Tina4}) does not depend on the degree $r$ of the approximation splines. The parameter $r$ has only an impact on the asymptotic constant $ E\left(\tau_{1,1}\right)$. We add that
due to (\ref{Tina2}) and (\ref{Tina4})  the method  $\widehat{X}_{k}^{\dagger}$  is asymptotically optimal in the class $\mathfrak{N}_{k,r}$ for every equation (\ref{eq1}) with additive noise.

The structure of the paper is as follows. In Section 2 we specify our assumptions regarding the equation (\ref{eq1}). The drift and diffusion coefficients must satisfy Lipschitz conditions, and the initial value must have a finite $q$-moment for all $q\geq 1$. Moreover, we briefly recall
some definitions and results from \cite{CreutzigGronbachRitter:07}
concerning the optimal approximation of $W$ by polynomial splines
with free knots. We introduce the approximation method
$\widehat{X}_{k}^{\dagger}$  and state the main results. Proofs are given in Section 3.

\section{Main result}

Given $\varepsilon>0,$ we define a sequence of stopping times by
$\tau_{0,\varepsilon}=0$ and \[
\tau_{j,\varepsilon}=\tau_{j,\varepsilon}\left(W\right)=\inf\Bigl\{ t>\tau_{j-1,\varepsilon}\;\mid\;\inf_{\pi\in\Pi_{r}}\left\Vert W-\pi\right\Vert _{L_{\infty}\left[\tau_{j-1,\varepsilon},\; t\right]}>\varepsilon\Bigr\},\quad j\geq1.\]
 For $j\in\mathbb{N}$ we define\[
\xi_{j,\varepsilon}=\tau_{j,\varepsilon}-\tau_{j-1,\varepsilon}.\]
These random variables yield the lengths of consecutive maximal subintervals
that permit best approximation from the space $\Pi_{r}$ with error
at most $\varepsilon$. For every $\varepsilon>0$ the random variables
$\xi_{j,\varepsilon}$ form an i.i.d. sequence with \[
\xi_{j,\varepsilon}\stackrel{d}{=}\varepsilon^{2}\cdot\tau_{1,1}\quad\mathrm{and}\quad E\left(\tau_{1,1}^{m}\right)<\infty\]
for every $m\in\mathbb{N}$, see \cite{CreutzigGronbachRitter:07}.
Furthermore, we consider the pathwise minimal approximation error
by splines using $k-1$ free knots \[
\gamma_{k}=\gamma_{k}\left(W\right)=\inf\left\{ \varepsilon>0\;\mid\;\tau_{k,\varepsilon}\geq1\right\} .\]
An optimal spline approximation of $W$ on $\left[0,1\right]$
with $k-1$ free knots is given by\begin{equation}
\widetilde{W}_{k}=\sum_{j=1}^{k}\mathbf{1}_{\left]\tau_{j-1,\gamma_{k}},\;\tau_{j,\gamma_{k}}\right]}\cdot\;\mathrm{argmin}_{\pi\in\Pi_{r}}\left\Vert W-\pi\right\Vert _{L_{\infty}\left[\tau_{j-1,\gamma_{k}},\;\tau_{j,\gamma_{k}}\right]}.\label{eq3}\end{equation}
More precisely, from \cite{CreutzigGronbachRitter:07} we know that
\begin{equation}
\bigl\Vert W-\widetilde{W}_{k}\bigr\Vert_{L_{\infty}\left[0,\,1\right]}=\gamma_{k}\;\approx\;\left(E\left(\tau_{1,1}\right)\cdot k\right)^{-1/2}\qquad a.s.\label{eq5}\end{equation}
and\begin{equation}
\left(E^{*}\left(\bigl\Vert W-\widetilde{W}_{k}\bigr\Vert_{L_{\infty}\left[0,\,1\right]}^{q}\right)\right)^{1/q}\;\approx\;\left(E\left(\tau_{1,1}\right)\cdot k\right)^{-1/2}.\label{eq4}\end{equation}

We assume that the drift coefficient $a:\left[0,1\right]\times\mathbb{R}\to\mathbb{R}$ and the diffusion  coefficient $\sigma:\left[0,1\right]\to\mathbb{R}$
and the initial value $X\left(0\right)$ have the following properties.

\begin{itemize}
\item $\mathrm{(A)}$ $a$ is differentiable with respect
to the state variable. Moreover, there exists a constant $K>0$, such that \begin{eqnarray*}
\left|a\left(t,x\right)-a\left(t,y\right)\right| & \leq & K\cdot\left|x-y\right|,\\
\left|a\left(s,x\right)-a\left(t,x\right)\right| & \leq & K\cdot\left(1+\left|x\right|\right)\cdot\left|s-t\right|,\\
\left|a^{\left(0,1\right)}\left(t,x\right)-a^{\left(0,1\right)}\left(t,y\right)\right| & \leq & K\cdot\left|x-y\right|\end{eqnarray*}
for all $s,\; t\in\left[0,1\right]$ and $x,\; y\in\mathbb{R}.$ 
\item $\mathrm{(B)}$ There exists a constant $K>0$, such that
$$
\left|\sigma\left(s\right)-\sigma\left(t\right)\right| \leq K\cdot\left|s-t\right|  $$
  and  
$$ \left|\sigma\left(t\right)\right| >0 $$
for all $ s,\;t\in\left[0,1\right].$ 
\item $\mathrm{(C)}$ The initial value $X\left(0\right)$ is independent
of $W$ and \[
E\left(\left|X\left(0\right)\right|^{q}\right)<\infty\qquad\mathrm{for\; all}\; q\geq1.\]
\end{itemize}
Note that $\mathrm{(A)}$ yields the linear growth condition, i.e., there exists a constant $C > 0$ such that \begin{equation}
\left|a\left(t,x\right)\right|\leq C\cdot\left(1+\left|x\right|\right)\label{growth}\end{equation} for all $ t\in\left[0,1\right]$ and $x\in\mathbb{R}.$\\
Conditions $\mathrm{(A)}$ and $\mathrm{(C)}$  are standard assumptions for analysing stochastic differential equations, while $\mathrm{(B)}$ is slightly stronger than the standard assumption for equations with additive noise. We conjecture, that the weaker condition $\sigma \neq 0$ would be sufficient to obtain the results in the paper.
Given the above properties, a pathwise unique strong solution of equation
(\ref{eq1}) with initial value $X\left(0\right)$ exists. In particular
the conditions assure that\begin{equation}
E\left(\left\Vert X\right\Vert _{L_{\infty}\left[0,1\right]}^{q}\right)<\infty\qquad\mathrm{for\; all}\; q\geq1.\label{eqEX}\end{equation}

Next, we turn to the definition of the spline approximation scheme
$\widehat{X}_{k}^{\dagger}$. Fix $\delta\in\left(1/2,1\right)$ and for
$k\in\mathbb{N}$ take
\begin{equation}
n_{k}=\left\lfloor k^{\delta}\right\rfloor. \label{revneu1}
\end{equation}
Note that\begin{equation}
\lim_{k\to\infty}\frac{n_{k}}{k}=0\qquad\mathrm{and}\qquad\lim_{k\to\infty}\frac{\sqrt{k}}{n_{k}}=0.\label{asymto1}\end{equation}
We take the Euler scheme to compute an approximation to $X$ at
the discrete points
\begin{equation}
t_{\ell}=\frac{\ell}{n_{k}},\qquad\ell=0,\ldots,n_{k}.\label{discrtization1}
\end{equation}
This scheme is defined by 
\[
\check{X}\left(t_{0}\right)=X\left(0\right)\]
and
\begin{equation}
\check{X}\left(t_{\ell+1}\right) = \check{X}\left(t_{\ell}\right) +a\left(t_{\ell},\check{X}\left(t_{\ell}\right)\right)\cdot\left(t_{\ell+1}-t_{\ell}\right)+\sigma\left(t_{\ell}\right)\cdot\left(W\left(t_{\ell+1}\right)-W\left(t_{\ell}\right)\right).\label{euler1}
\end{equation}
For every $\ell\in\left\{ 0,\ldots,n_{k}-1\right\} $ we consider the Brownian
motion $W^{\ell}$, defined by\[
W^{\ell}\left(t\right)=W\left(t\right)-W\left(t_{\ell}\right),\qquad t\in\left[t_{\ell},t_{\ell+1}\right].\]
Put $$ \sigma_{\ell}= \sigma\left(t_{\ell}\right) $$ and let
$$ m_{\ell,k} = \left\lfloor \left(\sigma_{\ell}^{2}/\sum_{i=0}^{n_{k}-1}\sigma_{i}^{2}\right)\cdot\left(k-n_{k}\right)\right\rfloor +1.$$
Let $\widehat{W}_{m_{\ell,k}}^{\ell}$ denote the asymptotically
optimal spline approximation of $W^{\ell}$ on the interval $\left[t_{\ell},t_{\ell+1}\right]$
with $m_{\ell,k}-1$ free knots, cf. (\ref{eq3}). Now, the approximation
method $\widehat{X}_{k}^{\dagger}$ is given by\[
\widehat{X}_{k}^{\dagger}\left(t_{0}\right)=X\left(0\right)\]
and for $t\in\left]t_{\ell},t_{\ell+1}\right]$
\begin{equation}
\widehat{X}_{k}^{\dagger}\left(t\right)=\check{X}\left(t_{\ell}\right)+a\left(t_{\ell},\check{X}\left(t_{\ell}\right)\right)\cdot\left(t-t_{\ell}\right)+\sigma_{\ell}\cdot\widehat{W}_{m_{\ell,k}}^{\ell}\left(t\right).\label{eq7}\end{equation}
Note that the number of free knots on $\left]t_{\ell},t_{\ell+1}\right[$ is given by $m_{\ell,k}-1$.
Since $$  k-n_{k}\leq n_{k}+1 +\sum_{\ell =0}^{n_{k}-1} \left(m_{\ell,k}-1\right) \leq k+1,$$ the method
$\widehat{X}_{k}^{\dagger}$ uses at most $k+1$ knots for every trajectory. Due to (\ref{asymto1}) the upper bound $k+1$ is sharply asymptotical. By formally introducing a few additional knots we get a method with $k-1$ free knots, i.e., $\widehat{X}_{k}^{\dagger} \in \mathfrak{N}_{k,r}$.\\

Now we can state the main results of the paper. 
\begin{Thm}
\label{theorem1-1} Assume that $\mathrm{(A)}$, $\mathrm{(B)}$ and $\mathrm{(C)}$
hold for equation (\ref{eq1}). Then we have
\begin{equation}
\lim_{k\to\infty}\sqrt{k}\cdot e_{q}\left(\widehat{X}_{k}^{\dagger}\right)=\left(E\left(\tau_{1,1}\right)\right)^{-1/2}\cdot\left\Vert \sigma\right\Vert _{2}\label{statement2}\end{equation}for all $q\geq1.$
\end{Thm}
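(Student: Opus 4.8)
The plan is to localize the error to the subintervals $\left]t_{\ell},t_{\ell+1}\right]$ of the coarse Euler grid and to show that, in the limit, only the free knot spline approximation of $W$ on each subinterval contributes. For $t\in\left]t_{\ell},t_{\ell+1}\right]$, writing $W^{\ell}(t)=W(t)-W(t_{\ell})$, one has the decomposition
\[
X(t)-\widehat X_{k}^{\dagger}(t)=\bigl(X(t_{\ell})-\check X(t_{\ell})\bigr)+D_{\ell}(t)+\sigma_{\ell}\bigl(W^{\ell}(t)-\widehat W_{m_{\ell,k}}^{\ell}(t)\bigr)+\int_{t_{\ell}}^{t}\bigl(\sigma(s)-\sigma_{\ell}\bigr)\,dW(s),
\]
with $D_{\ell}(t)=\int_{t_{\ell}}^{t}a(s,X(s))\,ds-a(t_{\ell},\check X(t_{\ell}))(t-t_{\ell})$. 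Put $Y_{\ell}=\absv{\sigma_{\ell}}\cdot\norm{W^{\ell}-\widehat W_{m_{\ell,k}}^{\ell}}_{L_{\infty}[t_{\ell},t_{\ell+1}]}$ and let $\rho_{\ell}$ be the sum, over $t\in\left]t_{\ell},t_{\ell+1}\right]$, of the suprema of the absolute values of the three remaining summands. Then, pathwise,
\[
\ABsv{\,\norm{X-\widehat X_{k}^{\dagger}}_{L_{\infty}[0,1]}-\max_{0\le\ell<n_{k}}Y_{\ell}\,}\le\max_{0\le\ell<n_{k}}\rho_{\ell},
\]
so that (\ref{statement2}) reduces to proving that $\sqrt{k}$ times the $L_{q}$-norm of $\max_{\ell}\rho_{\ell}$ tends to $0$ and that $\sqrt{k}$ times the $L_{q}$-norm of $\max_{\ell}Y_{\ell}$ tends to $c:=(E(\tau_{1,1}))^{-1/2}\norm{\sigma}_{2}$, for every $q\ge1$.

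For the remainder I would treat the three pieces of $\rho_{\ell}$ in turn; this parallels the coarse-grid analysis in \cite{Slassi:12}. The Euler grid error $X(t_{\ell})-\check X(t_{\ell})$ is the delicate one: a crude Lipschitz--Gronwall estimate gives only $O(n_{k}^{-1/2})$, which is too large once multiplied by $\sqrt{k}$. Summing the one-step errors and Taylor-expanding each drift increment in the state variable --- this is precisely where the differentiability of $a$ and the Lipschitz continuity of $a^{(0,1)}$ in $\mathrm{(A)}$ are used --- exhibits the accumulated error, up to genuinely smaller terms, as a sum of martingale increments whose conditional second moments are of order $n_{k}^{-3}$; Doob's and the Burkholder--Davis--Gundy inequalities together with a discrete Gronwall argument then give that the $L_{q}$-norm of $\max_{\ell}\absv{X(t_{\ell})-\check X(t_{\ell})}$ is $O(n_{k}^{-1})$, hence $o(k^{-1/2})$ by the second limit in (\ref{asymto1}). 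The term $D_{\ell}$ and the coefficient-freezing integral are each of order $n_{k}^{-3/2}$ in $L_{q}$ on a single subinterval (using $\absv{\sigma(s)-\sigma_{\ell}}\le Kn_{k}^{-1}$ from $\mathrm{(B)}$, Burkholder--Davis--Gundy, and the moment bound (\ref{eqEX}) to absorb the factors $1+\norm{X}_{L_{\infty}[0,1]}$); as $\sigma$ is deterministic and each such term depends only on the Brownian increments over its own subinterval, a Gaussian tail bound shows that taking the maximum over the $n_{k}\asymp k^{\delta}$ disjoint subintervals costs merely a factor $\sqrt{\log n_{k}}$, so these contributions are $O(n_{k}^{-3/2}\sqrt{\log n_{k}})=o(k^{-1/2})$ because $\delta>1/2$.

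For the main term I would use Brownian scaling: $\norm{W^{\ell}-\widehat W_{m_{\ell,k}}^{\ell}}_{L_{\infty}[t_{\ell},t_{\ell+1}]}=n_{k}^{-1/2}\gamma_{m_{\ell,k}}(B_{\ell})$, where $B_{0},\dots,B_{n_{k}-1}$ are independent standard Brownian motions on $[0,1]$ and $\gamma_{m}(B_{\ell})$ denotes the minimal $L_{\infty}[0,1]$-error of approximating $B_{\ell}$ by splines with $m-1$ free knots, as in (\ref{eq5}); since the $m_{\ell,k}$ are deterministic the $Y_{\ell}$ are independent. Writing $S_{k}=\sum_{i=0}^{n_{k}-1}\sigma_{i}^{2}$, $\mu_{\ell,k}=\sigma_{\ell}^{2}(k-n_{k})/S_{k}$ (so that $\mu_{\ell,k}\le m_{\ell,k}\le\mu_{\ell,k}+1$), and $Z_{m}^{(\ell)}=\gamma_{m}(B_{\ell})\,(E(\tau_{1,1})\,m)^{1/2}$, a short computation gives $\sqrt{k}\,Y_{\ell}=a_{\ell,k}\,Z_{m_{\ell,k}}^{(\ell)}$ with
\[
a_{\ell,k}=(E(\tau_{1,1}))^{-1/2}\cdot\biggl(\frac{k}{k-n_{k}}\biggr)^{1/2}\biggl(\frac{S_{k}}{n_{k}}\biggr)^{1/2}\biggl(\frac{\mu_{\ell,k}}{m_{\ell,k}}\biggr)^{1/2},
\]
so that the factor $\absv{\sigma_{\ell}}$ cancels. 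Since $n_{k}/k\to0$, since $S_{k}/n_{k}\to\norm{\sigma}_{2}^{2}$ (a left Riemann sum, using the continuity of $\sigma$ from $\mathrm{(B)}$), and since $\inf_{t}\absv{\sigma(t)}>0$ (again $\mathrm{(B)}$) forces $\min_{\ell}\mu_{\ell,k}\asymp k^{1-\delta}\to\infty$, whence $\mu_{\ell,k}/m_{\ell,k}\to1$ uniformly in $\ell$, we obtain $\max_{\ell}\absv{a_{\ell,k}-c}\to0$. By (\ref{eq5}), $Z_{m}^{(\ell)}\to1$ a.s.\ as $m\to\infty$; combining this with the uniform convergence of $a_{\ell,k}$ and a union bound over the $n_{k}$ independent intervals --- legitimate because $\min_{\ell}m_{\ell,k}\asymp k^{1-\delta}$ far outgrows $\log n_{k}$, and using the moment and tail estimates for $\gamma_{m}$ from \cite{CreutzigGronbachRitter:07} --- yields $\sqrt{k}\max_{\ell}Y_{\ell}\to c$ in probability; a uniform-integrability argument based on the same estimates (exploiting that $\sqrt{\min_{\ell}m_{\ell,k}}\asymp k^{(1-\delta)/2}$ dominates $n_{k}^{1/q'}$ for $q'$ large, which is possible since $\delta<1$), together with the $L_{q}$-convergence $Z_{m}^{(\ell)}\to1$ of (\ref{eq4}), then upgrades this to convergence in $L_{q}$. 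Combined with the remainder estimate, this proves (\ref{statement2}).

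I expect the main obstacle to be the remainder analysis, and within it the Euler grid error: reducing it from the naive $O(n_{k}^{-1/2})$ to $O(n_{k}^{-1})$ --- exactly what is needed for $\sqrt{k}$ times it to vanish --- forces genuine use of the extra regularity of the drift in $\mathrm{(A)}$ and of the martingale structure of the accumulated one-step errors, rather than a Lipschitz--Gronwall bound. A secondary difficulty is the uniform-in-$\ell$ control needed to pass from the $\ell$-by-$\ell$ asymptotics of $\sqrt{k}\,Y_{\ell}$ to the $L_{q}$-convergence of $\max_{\ell}Y_{\ell}$: one has to handle the deterministic constants $a_{\ell,k}$ and the fluctuations of $\gamma_{m_{\ell,k}}$ uniformly over a family of $\asymp k^{\delta}$ subintervals, which is where both the choice $\delta\in(1/2,1)$ and the nondegeneracy of $\sigma$ in $\mathrm{(B)}$ enter.
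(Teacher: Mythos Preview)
Your decomposition is exactly the paper's: your three remainder pieces are precisely $X-\overline{X}_{n_{k}}$ for the auxiliary process $\overline{X}_{n_{k}}$ defined in (\ref{milst}), and your main term $Y_{\ell}$ is $\overline{X}_{n_{k}}-\widehat X_{k}^{\dagger}$ restricted to $\left]t_{\ell},t_{\ell+1}\right]$, cf.\ (\ref{rev5}). The paper is more economical on both halves. For the remainder it does not redo the Euler--grid and coefficient--freezing analysis you sketch; instead it invokes Proposition~3 of \cite{Slassi:12}, which already gives $\bigl(E\norm{X-\overline{X}_{n_{k}}}_{L_{\infty}[0,1]}^{q}\bigr)^{1/q}\le C/n_{k}$, i.e.\ (\ref{Eqn7}), and this is $o(k^{-1/2})$ by (\ref{asymto1}). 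For the main term the paper proves Lemma~\ref{lemma2}, whose content coincides with your $L_{q}$-statement for $\max_{\ell}Y_{\ell}$, but the argument is organized differently: rather than passing through convergence in probability plus uniform integrability, it computes $E\bigl[\max_{\ell}(|\sigma_{\ell}|\gamma_{m_{\ell,k}}^{\ell})^{q}\bigr]$ directly via the tail integral, splitting at a deterministic threshold and controlling the pieces with (\ref{rev3}), Hoeffding's inequality for $S_{m_{\ell,k}}/m_{\ell,k}$, and the small-time tail bound $\mathbb{P}(\tau_{1,1}\le\eta)\le\exp(-C/\eta)$ from \cite{CreutzigGronbachRitter:07}; the lower bound in Lemma~\ref{lemma2} comes from a single $\ell$ and Fatou. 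Finally, the paper does not prove the lower bound of (\ref{statement2}) at all within Theorem~\ref{theorem1-1}: it only shows the $\limsup$ inequality (\ref{rev4}) and borrows the matching $\liminf$ from the minimal-error lower bound in Theorem~\ref{theorem1-2}. Your route is self-contained but longer; the paper's route leans on \cite{Slassi:12} and on Theorem~\ref{theorem1-2}, which makes the proof here very short. One minor imprecision in your sketch: the drift remainder $D_{\ell}$ is not a function of the Brownian increments on $\left]t_{\ell},t_{\ell+1}\right]$ alone (it involves $X(s)$, hence the full past), so the ``Gaussian tail / union bound'' justification you give for the $\sqrt{\log n_{k}}$ loss applies cleanly to the $\sigma$-freezing integral but needs the global bound (\ref{eqEX}) (and the already-established Euler grid estimate) to absorb the non-localized pieces of $D_{\ell}$ before the localized Gaussian part is isolated.
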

\textcompwordmark{}
\begin{Thm}
\label{theorem1-2}Assume that $\mathrm{(A)}$, $\mathrm{(B)}$ and $\mathrm{(C)}$
hold for equation (\ref{eq1}). Then, the minimal errors satisfy 
\begin{equation}
\lim_{k\to\infty}\sqrt{k}\cdot e_{k,q}^{\min}\left(X\right)=\left(E\left(\tau_{1,1}\right)\right)^{-1/2}\cdot\left\Vert \sigma\right\Vert _{2}\label{appro2}
\end{equation}
 for all $q\geq 1.$ 
\end{Thm}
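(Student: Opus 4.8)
\emph{Plan of proof.} The plan is to prove the two matching one-sided bounds separately. The inequality ``$\le$'' in (\ref{appro2}) is free: since $\widehat{X}_{k}^{\dagger}\in\mathfrak{N}_{k,r}$ we have $e_{k,q}^{\min}(X)\le e_{q}(\widehat{X}_{k}^{\dagger})$ for every $k$, so Theorem~\ref{theorem1-1} gives $\limsup_{k}\sqrt{k}\,e_{k,q}^{\min}(X)\le (E(\tau_{1,1}))^{-1/2}\|\sigma\|_{2}$. The substance is the matching lower bound, and it suffices to show that for an \emph{arbitrary} sequence $\widehat{X}_{k}\in\mathfrak{N}_{k,r}$,
$$\liminf_{k\to\infty}\sqrt{k}\,\|X-\widehat{X}_{k}\|_{L_{\infty}[0,1]}\ \ge\ (E(\tau_{1,1}))^{-1/2}\,\|\sigma\|_{2}\qquad\text{a.s.},$$
because Fatou's lemma for the outer expectation then upgrades this to $\liminf_{k}(\sqrt{k}\,e_{q}(\widehat{X}_{k}))^{q}=\liminf_{k}E^{*}(\sqrt{k}\,\|X-\widehat{X}_{k}\|_{L_{\infty}[0,1]})^{q}\ge((E(\tau_{1,1}))^{-1/2}\|\sigma\|_{2})^{q}$, and taking the infimum over $\mathfrak{N}_{k,r}$ yields the claim.

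For the almost sure bound I would localize on a regular grid. Pick an auxiliary sequence $N_{k}\to\infty$ with $N_{k}=o(k)$ and $N_{k}$ a power of $k$ large enough in the sense explained below, set $s_{i}=i/N_{k}$, $I_{i}=[s_{i-1},s_{i}]$, and write the equation on $I_{i}$ as
$$X(t)=X(s_{i-1})+a(s_{i-1},X(s_{i-1}))(t-s_{i-1})+\sigma(s_{i-1})(W(t)-W(s_{i-1}))+\rho_{i}(t),$$
so that $\rho_{i}$ collects $\int_{s_{i-1}}^{t}(a(u,X(u))-a(s_{i-1},X(s_{i-1})))\,du$ and $\int_{s_{i-1}}^{t}(\sigma(u)-\sigma(s_{i-1}))\,dW(u)$. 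Using assumptions (A), (B), the bound (\ref{eqEX}) and the Burkholder--Davis--Gundy inequality — the extra factor $|u-s_{i-1}|\le 1/N_{k}$ present in both integrands is exactly what makes the estimate strong enough — one obtains moment bounds on $\max_{i\le N_{k}}\|\rho_{i}\|_{L_{\infty}(I_{i})}$ that, via Borel--Cantelli, give $\sqrt{k}\,\max_{i\le N_{k}}\|\rho_{i}\|_{L_{\infty}(I_{i})}\to 0$ a.s.\ as soon as $N_{k}$ grows faster than $k^{1/3}$. For $r=0$ the deterministic term $a(s_{i-1},X(s_{i-1}))(t-s_{i-1})$ must additionally be absorbed into $\rho_{i}$, which forces the stronger requirement $N_{k}\gg\sqrt{k}$; for $r\ge1$ it is absorbed by the approximating polynomials and no such restriction is needed.

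Next I would reduce to the free-knot behaviour of Brownian motion. Restricted to $I_{i}$, the spline $\widehat{X}_{k}$ is a polynomial spline of degree $\le r$ with at most $\kappa_{i}$ free knots in the interior of $I_{i}$, where $\sum_{i=1}^{N_{k}}\kappa_{i}\le k-1$. Since polynomials of degree $\le r$ absorb the affine part of $X$ on $I_{i}$ (for $r=0$ with the linear term moved into $\rho_{i}$ as above), Brownian scaling of the optimal free-knot approximation gives
$$\|X-\widehat{X}_{k}\|_{L_{\infty}(I_{i})}\ \ge\ \sigma(s_{i-1})\,N_{k}^{-1/2}\,\gamma_{\kappa_{i}+1}(B_{i})\ -\ \|\rho_{i}\|_{L_{\infty}(I_{i})},$$
where $B_{1},\dots,B_{N_{k}}$ are independent standard Brownian motions on $[0,1]$ and $\gamma_{m}$ is as in Section~2 (so $\gamma_{\kappa_{i}+1}$ is the best error with $\kappa_{i}$ free knots). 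Taking the maximum over $i$ and invoking a \emph{uniform} version of (\ref{eq5}) — $\gamma_{m}(B_{i})\ge(1-\eta_{k})(E(\tau_{1,1})m)^{-1/2}$ for all $i\le N_{k}$ and all $m\ge m_{0}(k)$, with $\eta_{k}\to0$ and $m_{0}(k)\to\infty$, $m_{0}(k)=o(k/N_{k})$ — one distinguishes two cases. If some $\kappa_{i}+1<m_{0}(k)$, monotonicity of $\gamma_{\cdot}$ and $m_{0}(k)=o(k/N_{k})$ already force $\sqrt{k}\,\|X-\widehat{X}_{k}\|_{L_{\infty}(I_{i})}\ge(1-o(1))\|\sigma\|_{2}(E(\tau_{1,1}))^{-1/2}$. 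Otherwise $\kappa_{i}+1\ge m_{0}(k)$ for every $i$, hence $\|X-\widehat{X}_{k}\|_{L_{\infty}(I_{i})}\ge(1-\eta_{k})\sigma(s_{i-1})(N_{k}E(\tau_{1,1})(\kappa_{i}+1))^{-1/2}-\|\rho_{i}\|$ for all $i$; minimising the maximum of the right-hand side over the budget $\sum_{i}(\kappa_{i}+1)\le k-1+N_{k}$, whose optimum is attained at $\kappa_{i}+1\propto\sigma(s_{i-1})^{2}$, and using the Riemann-sum convergence $N_{k}^{-1}\sum_{i\le N_{k}}\sigma(s_{i-1})^{2}\to\|\sigma\|_{2}^{2}$ together with $k/(k-1+N_{k})\to1$, gives the same conclusion. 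Letting $\eta_{k}\to0$ and using $\sqrt{k}\max_{i}\|\rho_{i}\|\to0$ completes the almost sure lower bound.

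The main obstacle is the uniform renewal estimate just used: (\ref{eq5}) is an almost sure statement for a single Brownian motion and $k\to\infty$, whereas here it must hold simultaneously for the $N_{k}\to\infty$ independent pieces $B_{i}$ and be stable under rescaling of the error level from $\gamma$ to $\gamma/\sigma(s_{i-1})$. Making this precise requires a large-deviation bound on $\sqrt{m}\,\gamma_{m}(W)-(E(\tau_{1,1}))^{-1/2}$ — equivalently, on the deviations of the renewal counting function of the stopping times $\tau_{j,\varepsilon}$ from its law of large numbers value — that is summable under a union bound over $i\le N_{k}$; this is also what determines how fast $N_{k}$, and hence $m_{0}(k)$, may be chosen, and that rate must then be reconciled with the requirements $N_{k}\gg k^{1/3}$ (or $N_{k}\gg\sqrt{k}$ when $r=0$) and $N_{k}=o(k)$ coming from the localization step. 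The remaining ingredients — the moment bounds on $\rho_{i}$, the balancing computation for the knot budget, and the final appeal to Fatou's lemma — are routine.
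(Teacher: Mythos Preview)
Your plan follows the same skeleton as the paper's proof: the upper bound comes from Theorem~\ref{theorem1-1}, and for the lower bound one localizes on a coarse grid, reduces on each subinterval to free-knot approximation of Brownian motion, and closes with Fatou's lemma. Two organizational differences are worth noting. First, the paper handles the localization error in $L^{q}$ rather than pathwise: it compares $X$ directly to the auxiliary process $\overline{X}_{n_{k}}$ of~(\ref{milst}) and invokes the ready-made bound $(E\|X-\overline{X}_{n_{k}}\|_{L_{\infty}}^{q})^{1/q}\le C/n_{k}$ from~\cite{Slassi:12}, which avoids your BDG/Borel--Cantelli step, the separate treatment of $r=0$, and the attendant growth constraints on $N_{k}$. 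Second---and this is the main simplification---the paper bypasses your two-case min-max optimization over the knot budget by a one-line pigeonhole: it \emph{reuses} the allocation $m_{\ell,k}\propto\sigma_{\ell}^{2}$ already defined for $\widehat{X}_{k}^{\dagger}$ and observes that any competitor $\widehat{X}_{k}$ must place at most $m_{\ell_{0}-1,k}+2$ knots in \emph{some} subinterval $(\bar t_{\ell_{0}-1},\bar t_{\ell_{0}})$ (otherwise the total exceeds $k$), so its error there is at least $|\sigma_{\ell_{0}-1}|\,\gamma_{m_{\ell_{0}-1,k}+2}^{\ell_{0}-1}$; by~(\ref{eq6N}) this is asymptotically $(E\tau_{1,1})^{-1/2}\|\sigma\|_{2}/\sqrt{k}$ \emph{independently of} $\ell_{0}$, precisely because the $m_{\ell,k}$ were chosen to balance these quantities across $\ell$. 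This replaces your explicit optimization and uniform-over-$m\ge m_{0}(k)$ estimate by a comparison to a single fixed allocation. Your concern about needing a uniform (in $\ell\le n_{k}$) version of the renewal asymptotic is well placed and in fact applies equally to the paper's argument, since $\ell_{0}=\ell_{0}(\omega)$ is random; the deviation estimates appearing in the proof of Lemma~\ref{lemma2} are what one would use to make this step fully rigorous in either approach.
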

Due to (\ref{statement2}) and (\ref{appro2}) the method $\widehat{X}_{k}^{\dagger}$ is asymptotically optimal in the class $\mathfrak{N}_{k,r}$ for every equation (\ref{eq1}) with additive noise.
\section{Proof of main result }
For the proof of Theorem \ref{theorem1-1} we need the following Lemma.\\
For every $\ell=0,\ldots,n_{k}-1$ we consider the pathwise minimal 
approximation error of $W^{\ell}$\[
\gamma_{m_{\ell,k}}^{\ell}=\gamma_{m_{\ell,k}}^{\ell}\bigl(W^{\ell}\bigr)=\inf\bigl\{\varepsilon>0\;\mid\;\tau_{m_{\ell,k},\varepsilon}^{\ell}\geq t_{\ell+1}\bigr\},\]
where $\left(\tau_{j,\varepsilon}^{\ell}\right)_{j\in\mathbb{N}}$
denotes the sequence of stopping times on $\left[t_{\ell},t_{\ell+1}\right],$
defined by\[
\tau_{0,\varepsilon}^{\ell}=t_{\ell}\] 
and\[
\tau_{j,\varepsilon}^{\ell}=\tau_{j,\varepsilon}^{\ell}\bigl(W^{\ell}\bigr)=\inf\Bigl\{ t>\tau_{j-1,\varepsilon}^{\ell}\;\mid\;\inf_{\pi\in\Pi_{r}}\bigl\Vert W^{\ell}-\pi\bigr\Vert_{L_{\infty}\left[\tau_{j-1,\varepsilon}^{\ell},\, t\right]}>\varepsilon\Bigr\},\qquad j\geq1.\]
So, we have \begin{equation}
\bigl\Vert W^{\ell}-\widehat{W}_{m_{\ell,k}}^{\ell}\bigr\Vert_{L_{\infty}\left[t_{\ell},\, t_{\ell+1}\right]}=\gamma_{m_{\ell,k}}^{\ell}\qquad a.s.\label{Wl1}\end{equation}
Renormalizing each interval $\left[t_{\ell},t_{\ell+1}\right]$ to
$\left[0,1\right]$ it can easily be shown that 
 \begin{equation}
\gamma_{m_{\ell,k}}^{\ell}\stackrel{d}{=}\frac{1}{\sqrt{n_{k}}}\cdot\gamma_{m_{\ell,k}}\label{eq6}\end{equation} and
\begin{equation}
 \gamma_{m_{\ell,k}}^{\ell} \approx\left(E\tau_{1,1}\right)^{-1/2}\cdot\frac{1}{\sqrt{m_{\ell,k}}\cdot \sqrt{n_{k}}}\qquad a.s. \label{rev2}
\end{equation}
for every $\ell \in \mathbb{N}_{0}$, by Lemma 8 in \cite{CreutzigGronbachRitter:07}.
Furthermore, due to (\ref{asymto1}) we have 
\begin{equation}
\left|\sigma_{\ell}\right|\cdot \gamma_{m_{\ell,k}}^{\ell} \approx\left(E\tau_{1,1}\right)^{-1/2}\cdot\left\Vert \sigma\right\Vert _{2} \cdot\left(1/k\right)^{1/2} \qquad \mathrm{a.s.} \label{eq6N}
\end{equation}
for every $\ell \in \mathbb{N}_{0}.$

From now on let $C$ denote unspecified positive constants, which
only depend on the constant $K$ from condition $\mathrm{(A)}$, as
well as on $a\left(0,0\right),\;\sigma\left(0,0\right)$ and $E\bigl|X\left(0\right)\bigr|^{q}$. 
\begin{Lem}
\label{lemma2} For all $q \geq 1$ we have 
\begin{equation}
\lim_{k\to\infty}\sqrt{k}\cdot\left(E\max_{0\leq\ell\leq n_{k}-1}\left(\left|\sigma_{\ell}\right|\cdot \gamma_{m_{\ell,k}}^{\ell}\right)^{q}\right)^{1/q}=\left(E\left(\tau_{1,1}\right)\right)^{-1/2}\cdot \left\Vert \sigma\right\Vert _{2}.\label{eqlemma2}
\end{equation}
\end{Lem}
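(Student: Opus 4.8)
The plan is to reduce the statement to a law-of-large-numbers type argument for the i.i.d.\ structure underlying the stopping times, combined with the uniform asymptotics in \eqref{eq6N}. First I would use the distributional identity \eqref{eq6}, namely $\gamma_{m_{\ell,k}}^{\ell}\eqdi n_k^{-1/2}\,\gamma_{m_{\ell,k}}$, where $\gamma_{m}$ is the pathwise minimal spline error for Brownian motion on $[0,1]$ with $m-1$ free knots. By the definition of $\gamma_m$ via the renewal sequence $\tau_{j,\ve}$ and the scaling $\xi_{j,\ve}\eqdi \ve^2\tau_{1,1}$, one has the two-sided deterministic-in-$m$ bound that $\gamma_m$ is, up to $1+o(1)$ factors, of order $(E(\tau_{1,1})\,m)^{-1/2}$; more precisely I would extract from \cite{CreutzigGronbachRitter:07} (Lemma 8) not just the a.s.\ asymptotics but moment bounds of the form $\sup_m E\bigl((\sqrt m\,\gamma_m)^{q}\bigr)^{1/q}\le C_q$ and a matching lower bound, uniformly in $m\ge 1$. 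This is what upgrades the a.s.\ statement \eqref{eq6N} to an $L_q$ statement and, crucially, controls the maximum over the $n_k$ blocks.

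Next I would handle the two inequalities separately. For the lower bound, fix any single index, say $\ell=0$; then $E\max_\ell(|\sigma_\ell|\gamma^\ell_{m_{\ell,k}})^q \ge E(|\sigma_0|\gamma^0_{m_{0,k}})^q$, and since $m_{0,k}\asymp k^{1-\delta}\to\infty$ while $n_k\sim k^{\delta}$, the product $|\sigma_0|\gamma^0_{m_{0,k}}\sqrt k \to (E\tau_{1,1})^{-1/2}|\sigma_0|$ a.s.\ by \eqref{rev2}; with the uniform moment bound this convergence also holds in $L_q$. But a single block only gives $|\sigma_0|$ on the right, not $\|\sigma\|_2$. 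To recover $\|\sigma\|_2$ I would instead use that the normalizing $\sqrt k$ against each block gives, by \eqref{eq6N}, $|\sigma_\ell|\gamma^\ell_{m_{\ell,k}}\sqrt k \approx (E\tau_{1,1})^{-1/2}\|\sigma\|_2$ \emph{simultaneously} for every $\ell$ — indeed the definition of $m_{\ell,k}$ was rigged so that $\sigma_\ell^2 m_{\ell,k} n_k \sim \|\sigma\|_2^2\, k$ uniformly in $\ell$ (using $\sum_i \sigma_i^2/n_k \to \|\sigma\|_2^2$ by continuity of $\sigma$ and the Riemann-sum convergence, plus $(k-n_k)/k\to1$). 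So each individual term already converges to the target constant; the lower bound then follows from taking any fixed $\ell$ together with this uniform constant.

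For the upper bound, which I expect to be the main obstacle, I need $E\max_{0\le\ell\le n_k-1}(\sqrt k\,|\sigma_\ell|\gamma^\ell_{m_{\ell,k}})^q \to \bigl((E\tau_{1,1})^{-1/2}\|\sigma\|_2\bigr)^q$, i.e.\ the maximum over a growing number $n_k$ of blocks must not inflate the constant. The danger is a large-deviation tail: even though each $\sqrt m_{\ell,k}\,\gamma^\ell_{m_{\ell,k}}\sqrt{m_{\ell,k} n_k}$ concentrates, the max over $n_k$ blocks could pick up an extra logarithmic-in-$n_k$ factor unless the upper tail of $\sqrt m\,\gamma_m$ decays fast enough. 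The key input must therefore be an exponential-type tail bound: $P(\sqrt m\,\gamma_m > \lambda\,(E\tau_{1,1})^{-1/2}) \le C\exp(-c\,m\,(\lambda-1)^2)$ or similar, obtainable from the renewal representation (the event $\gamma_m>\ve$ means $\tau_{m,\ve}<1$, i.e.\ $\sum_{j=1}^m \xi_{j,\ve}<1$ with $\xi_{j,\ve}\eqdi\ve^2\tau_{1,1}$, which is a sum of $m$ i.i.d.\ positive variables being unusually small — a standard Cramér-type lower-tail estimate since $E\tau_{1,1}>0$ and $\tau_{1,1}$ has all moments). Since $m_{\ell,k}\ge 1$ is actually of order $k^{1-\delta}$ with $\delta<1$, we have $m_{\ell,k}\gg \ln n_k$, so the union bound over $\ell=0,\dots,n_k-1$ of these exponentially small tails is negligible; then $E\max_\ell(\cdots)^q = \int_0^\infty q\lambda^{q-1}P(\max_\ell>\lambda)\,d\lambda$ splits into the bulk (where each term, hence the max, is $\le (1+o(1))(E\tau_{1,1})^{-1/2}\|\sigma\|_2$) and a vanishing tail contribution controlled by the exponential bound. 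Combining the matching lower and upper bounds yields \eqref{eqlemma2}. I would also need to verify, via condition (B) (Lipschitz $\sigma$, $\sigma\ne 0$), that $\max_\ell |\sigma_\ell|$ and $\min_\ell|\sigma_\ell|$ stay bounded away from $\infty$ and $0$, and that $\sigma_\ell^2 (k-n_k)/\sum_i\sigma_i^2 - n_k m_{\ell,k}\to$ the right thing uniformly; these are routine Riemann-sum estimates with the error from the floor functions $\le 1$ being swamped by $m_{\ell,k}\to\infty$.
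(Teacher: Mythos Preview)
Your plan is essentially the paper's own argument: for the lower bound, take a single block and apply Fatou together with \eqref{eq6N}; for the upper bound, use the renewal identity $\{\gamma_m>\ve\}=\{\sum_{j=1}^m\xi_{j,1}<\ve^{-2}\}$, a union bound over the $n_k$ blocks, and an exponential lower-tail bound for the i.i.d.\ sum, exploiting $m_{\ell,k}\asymp k^{1-\delta}\gg\ln n_k$. The paper carries this out by writing the $q$th moment as $a_k+\int_{a_k}^\infty \PP(\max_\ell\cdots>t)\,dt$ and splitting the tail integral into two regimes: a moderate-deviation range, where it invokes H\"offding's inequality for $\PP(S_{m_{\ell,k}}/m_{\ell,k}<t)$ with $t\le\mu/\rho$, and a far-tail range, where it uses the small-ball bound $\PP(\tau_{1,1}\le\eta)\le\exp(-C/\eta)$ from \cite{CreutzigGronbachRitter:07} together with $\PP(S_m<s)\le\bigl(\PP(\tau_{1,1}<s)\bigr)^m$.

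The one point where your sketch needs tightening is the single ``Cram\'er-type'' bound $\PP(\sqrt m\,\gamma_m>\lambda\mu^{-1/2})\le C\exp(-cm(\lambda-1)^2)$: Cram\'er's theorem requires finite exponential moments, whereas $\tau_{1,1}$ is only known to have all polynomial moments, and the Gaussian-looking decay in $\lambda$ is not what one actually gets for large $\lambda$. The paper sidesteps this by the two-regime split; equivalently one can truncate $\xi_{j,1}$ from above (which does not affect the lower tail) and apply Hoeffding to the bounded variables for the first regime, then use the explicit small-ball estimate for the second. With this clarification your argument and the paper's coincide.
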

\begin{proof}
We have
$$ E\max_{0\leq\ell\leq n_{k}-1}\left(\left|\sigma_{\ell}\right|\cdot \gamma_{m_{\ell,k}}^{\ell}\right)^{q} = \left(\sum_{i=0}^{n_{k}-1}\sigma_{i}^{2}\right)^{q/2}\cdot E\max_{0\leq\ell\leq n_{k}-1}\left(\left(\left|\sigma_{\ell}\right|/\left(\sum_{i=0}^{n_{k}-1}\sigma_{i}^{2}\right)^{1/2}\right)\cdot \gamma_{m_{\ell,k}}^{\ell}\right)^{q}.$$ 
Let $\rho>1$, and put $\mu=E\bigl(\tau_{1,1}\bigr)$ and $a_{k}=\frac{1}{\left(\sqrt{n_{k}\cdot \left(k-n_{k}\right)}\right)^{q}\cdot\left(\sqrt{\mu/\rho}\right)^{q}}$. Then, \[
E\max_{0\leq\ell\leq n_{k}-1}\left(\left(\left|\sigma_{\ell}\right|/\left(\sum_{i=0}^{n_{k}-1}\sigma_{i}^{2}\right)^{1/2}\right)\cdot \gamma_{m_{\ell,k}}^{\ell}\right)^{q} \leq a_{k}+I\left(k\right),\]
where\[
I\left(k\right)=\int_{a_{k}}^{\infty}\mathbb{P}\left( \max_{0\leq\ell\leq n_{k}-1}\left(\left(\left|\sigma_{\ell}\right|/\left(\sum_{i=0}^{n_{k}-1}\sigma_{i}^{2}\right)^{1/2}\right)\cdot \gamma_{m_{\ell,k}}^{\ell}\right)^{q} >t\right)\, dt.\]
Firstly, by (\ref{asymto1}) we have\begin{equation}
\lim_{k\to\infty}\left(\sqrt{k}\right)^{q}\cdot\left(\sum_{i=0}^{n_{k}-1}\sigma_{i}^{2}\right)^{q/2}\cdot a_{k}=\frac{\left\Vert \sigma\right\Vert _{2}^{q}}{\left(\sqrt{\mu/\rho}\right)^{q}}.\label{eqI}\end{equation}
Using (\ref{eq6}) we get the estimate
$$I\left(k\right)\leq \sum_{\ell=0}^{n_{k}-1} \int_{a_{k}}^{\infty}\mathbb{P}\left(\gamma_{m_{\ell,k}}>t^{1/q}\cdot\sqrt{n_{k}} \cdot \left(\left(\sum_{i=0}^{n_{k}-1}\sigma_{i}^{2}\right)^{1/2}/\left|\sigma_{\ell}\right|\right)\right)\, dt.$$ Then,  we split the above right-hand side in to term $I_{1}\left(k\right)$ and $I_{2}\left(k\right)$, where
\[
I_{1}\left(k\right)= \sum_{\ell=0}^{n_{k}-1} \int_{a_{k}}^{n_{k}^{q}\cdot a_{k}}\mathbb{P}\left(\gamma_{m_{\ell,k}}>t^{1/q}\cdot\sqrt{n_{k}} \cdot \left(\left(\sum_{i=0}^{n_{k}-1}\sigma_{i}^{2}\right)^{1/2}/\left|\sigma_{\ell}\right|\right)\right)\, dt\]
and
$$ I_{2}\left(k\right) = \sum_{\ell=0}^{n_{k}-1} \int_{n_{k}^{q}\cdot a_{k}}^{\infty}\mathbb{P}\left(\gamma_{m_{\ell,k}}>t^{1/q}\cdot\sqrt{n_{k}} \cdot \left(\left(\sum_{i=0}^{n_{k}-1}\sigma_{i}^{2}\right)^{1/2}/\left|\sigma_{\ell}\right|\right)\right)\, dt.$$
We put\[ 
S_{n}=\sum_{j=1}^{n}\xi_{j,1}.\] 
Using the fact that for all $\varepsilon>0$ 
\begin{equation}
\mathbb{P}\left(\gamma_{m_{\ell,k}}\leq\varepsilon\right)=\mathbb{P}\left(S_{m_{\ell,k}}\geq1/\varepsilon^{2}\right) \label{rev3}
\end{equation}
and the random variables $\xi_{j,1}$ form an i.i.d. sequence (see \cite{CreutzigGronbachRitter:07}), it follows by substitution on the one hand that$$
I_{1}\left(k\right) = \frac{q}{2\left(\sqrt{\left(k-n_{k}\right)\cdot n_{k}}\right)^{q}}\sum_{\ell=0}^{n_{k}-1} \int_{\mu/\left(\rho\cdot n_{k}^{2}\right)}^{\mu/\rho}t^{-\left(q/2+1\right)}\cdot\mathbb{P}\left(\frac{S_{m_{\ell,k}}}{m_{\ell,k}}<t\right)\, dt.$$
For $\mu/\left(\rho\cdot n_{k}^{2}\right)\leq t\leq\mu/\rho$ we use H\"offding's
inequality to obtain\begin{eqnarray*}
t^{-\left(q/2+1\right)}\cdot\mathbb{P}\left(\frac{S_{m_{\ell,k}}}{m_{\ell,k}}<t\right) & \leq & t^{-\left(q/2+1\right)}\cdot\mathbb{P}\left(\left|\frac{S_{m_{\ell,k}}}{m_{\ell,k}}-\mu\right|>\mu-\mu/\rho\right)\\
 & \leq & \frac{\left(\rho\cdot n_{k}^{2}\right)^{q/2+1}}{\mu^{q/2+1}}\cdot2\exp\left(-2m_{\ell,k}.\left(\mu-\mu/\rho\right)^{2}\right)\end{eqnarray*} for every $\ell =0,\dots,n_{k}-1.$
This yields \begin{equation}
\lim_{k\to\infty}\left(\sqrt{k}\right)^{q}\cdot \left(\sum_{i=0}^{n_{k}-1}\sigma_{i}^{2}\right)^{q/2}\cdot I_{1}\left(k\right)=0.\label{eqJ1}\end{equation}
To verify this, it suffices to show
\begin{equation}
\lim_{k\to \infty}n_{k}^{q+2}\sum_{\ell=0}^{n_{k}-1}\exp\left(-2m_{\ell,k}\cdot c\right) = 0 \label{rev6}
\end{equation}
with $c>0$. In fact we have
$$ m_{\ell,k} \approx \frac{\sigma_{\ell}^{2}}{\left\Vert \sigma\right\Vert _{2}^{2}}\cdot \frac{k}{n_{k}}$$
for every $\ell \in \mathbb{N}_{0}$. Let $\alpha = \inf_{0\leq t \leq 1} \left(\sigma\left(t\right)\right)^{2}$. Using the definition of $n_{k}$ in (\ref{revneu1}) we get for $k$ sufficiently large
\begin{eqnarray*}
n_{k}^{q+2}\sum_{\ell=0}^{n_{k}-1}\exp\left(-2m_{\ell,k}\cdot c\right) &\leq & n_{k}^{q+2}\sum_{\ell=0}^{n_{k}-1}\exp\left(-\frac{\sigma_{\ell}^{2}}{\left\Vert \sigma\right\Vert _{2}^{2}}\cdot \frac{k}{n_{k}} \cdot c\right)\\
                                                                  &\leq&   k^{\delta \cdot \left(q+3\right)}\cdot \exp\left(-\frac{\alpha}{\left\Vert \sigma\right\Vert _{2}^{2}}\cdot k^{1-\delta} \cdot c\right),
\end{eqnarray*} 
which yields (\ref{rev6}).\\
On the other hand, using (\ref{rev3}) we obtain 
\begin{eqnarray*}
I_{2}\left(k\right) & =& \sum_{\ell=0}^{n_{k}-1}\int_{n_{k}^{q}\cdot a_{k}}^{\infty}\mathbb{P}\left(\sum_{j=1}^{m_{\ell,k}} \xi_{j,1}<\frac{\sigma_{\ell}^{2}}{t^{2/q}\cdot n_{k}\cdot\sum_{i=0}^{n_{k}-1}\sigma_{i}^{2} }\right)\, dt\\
                   & \leq & \sum_{\ell=0}^{n_{k}-1}\int_{n_{k}^{q}\cdot a_{k}}^{\infty}\left(\mathbb{P}\left(\tau_{1,1}<\frac{\sigma_{\ell}^{2}}{t^{2/q}\cdot n_{k}\cdot\sum_{i=0}^{n_{k}-1}\sigma_{i}^{2} }\right)\right)^{m_{\ell,k}}\, dt.\end{eqnarray*} 
Note that for all $\eta\leq1$ \[
\mathbb{P}\left(\tau_{1,1}\leq\eta\right)\leq\exp\left(-C\cdot\eta^{-1}\right)\]
with some constant $C>0$; see the proof of Lemma 8 in \cite{CreutzigGronbachRitter:07}. From (\ref{asymto1}) we have  $$\frac{k-n_{k}}{n_{k}^{2}}\cdot\left(\mu/\rho\right)\leq1$$ for $k$ sufficiently large. 
Then, for all $t\geq n_{k}^{q}\cdot a_{k}$ we have\[
 \frac{\sigma_{\ell}^{2}}{t^{2/q}\cdot n_{k}\cdot\sum_{i=0}^{n_{k}-1}\sigma_{i}^{2} }\leq1 \]for every $\ell =0,\dots,n_{k}-1.$\\
Hence, we get\begin{eqnarray*}
I_{2}\left(k\right) & \leq & n_{k}\cdot\int_{n_{k}^{q}\cdot a_{k}}^{\infty}\exp\left(-C\cdot\left(k-n_{k}\right)\cdot n_{k}\cdot t^{2/q}\right)\! dt\\
 & = & \frac{q}{2}\cdot n_{k}\cdot\frac{1}{\left(\sqrt{n_{k}\cdot \left(k-n_{k}\right)}\right)^{q}}\int_{\frac{n_{k}^{2}}{\mu/\rho}}^{\infty}t^{q/2-1}\cdot\exp\left(-C\cdot t\right)\! dt\\
 & \leq & \frac{q\cdot\mu}{2\cdot\rho}\cdot\frac{1}{\left(\sqrt{n_{k}\cdot\left(k-n_{k}\right)}\right)^{q}}\int_{\frac{n_{k}^{2}}{\mu/\rho}}^{\infty}t^{q/2}\cdot\exp\left(-C\cdot t\right)\! dt,\end{eqnarray*}
which implies \begin{equation}
\lim_{k\to\infty}\left(\sqrt{k}\right)^{q} \cdot \left(\sum_{i=0}^{n_{k}-1}\sigma_{i}^{2}\right)^{q/2}\cdot I_{2}\left(k\right)=0.\label{eqJ2}\end{equation}
Finally, combining (\ref{eqI})-(\ref{eqJ2}),
we obtain\[
\limsup_{k\to\infty}\sqrt{k}\cdot\left(E\max_{0\leq\ell\leq n_{k}-1}\left(\left|\sigma_{\ell}\right|\cdot \gamma_{m_{\ell,k}}^{\ell}\right)^{q}\right)^{1/q}\leq \frac{\left\Vert \sigma\right\Vert _{2}^{q}}{\left(\sqrt{\mu/\rho}\right)^{q}}.\]
Letting $\rho$ tend to $1$ yields the upper bound in (\ref{eqlemma2}).

For establishing the lower bound in (\ref{eqlemma2}) it suffices to study the case $q=1$. In fact  we have
$$  E\left(\max_{0\leq\ell\leq n_{k}-1}\left|\sigma_{\ell}\right|\cdot \gamma_{m_{\ell,k}}^{\ell}\right)\geq E\left(\left|\sigma_{0}\right|\cdot \gamma_{m_{0,k}}^{0} \right).$$
We use (\ref{eq6N}) and Fatou's Lemma to obtain
$$\liminf_{k\to\infty}\sqrt{k}\cdot E\left(\max_{0\leq\ell\leq n_{k}-1}\left|\sigma_{\ell}\right|\cdot \gamma_{m_{\ell,k}}^{\ell}\right) \geq \frac{\left\Vert \sigma\right\Vert _{2}}{\sqrt{\mu}}, $$
which completes the proof.
\end{proof}
In order to prove the main result given in Theorem \ref{theorem1-1},
we introduce the process $\overline{X}_{n_{k}}$ as follows. For $k\in\mathbb{N}$
let \[
0=t_{0}<t_{1}<\cdots<t_{n_{k}}=1\] 
be the discretization (\ref{discrtization1}) of $\left[0,1\right]$. The
process $\overline{X}_{n_{k}}$ is given by $\overline{X}_{n_{k}}\left(0\right)=X\left(0\right)$
and for $t\in\left[t_{\ell},t_{\ell+1}\right]$\begin{equation}
\overline{X}_{n_{k}}\left(t\right)=\overline{X}_{n_{k}}\left(t_{\ell}\right)+a\left(t_{\ell},\overline{X}_{n_{k}}\left(t_{\ell}\right)\right)\cdot\left(t-t_{\ell}\right)+ \sigma_{\ell}\cdot\left(W\left(t\right)-W\left(t_{\ell}\right)\right).\label{milst}
\end{equation}
Note that $\overline{X}_{n_{k}}$ coincides with the Euler scheme (\ref{euler1})
at the discretization points $t_{\ell}$. Instead of estimating $X-\widehat{X}_{k}^{\dagger}$
directly, we consider $X-\overline{X}_{n_{k}}$, as well as $\overline{X}_{n_{k}}-\widehat{X}_{k}^{\dagger}$
separately. From Proposition 3 in \cite{Slassi:12} we know that 
\begin{equation}
 \left(E\bigl\Vert X-\overline{X}_{n_{k}}\bigr\Vert_{L_{\infty}\left[0,1\right]}^{q}\right)^{1/q} \leq C\cdot \frac{1}{n_{k}}.\label{Eqn7}
\end{equation}
From this and (\ref{asymto1}) it follows that 
\begin{equation}
\lim_{k\to\infty}\sqrt{k}\cdot \left(E\bigl\Vert X-\overline{X}_{n_{k}}\bigr\Vert_{L_{\infty}\left[0,1\right]}^{q}\right)^{1/q} =0,\label{Eqn8}
\end{equation}
and so $\left(E\left\Vert \overline{X}_{n_{k}}-\widehat{X}_{k}^{\dagger}\right\Vert _{L_{\infty}\left[0,1\right]}^{q}\right)^{1/q}$
is the  asymptotically dominating term.

\textbf{Proof of Theorem \ref{theorem1-1}}. In view of the lower bound in Theorem \ref{theorem1-2} it suffices to show
\begin{equation}
\limsup_{k\to\infty}\sqrt{k}\cdot\left(E^{*}\left\Vert X-\widehat{X}_{k}^{\dagger}\right\Vert _{L_{\infty}\left[0,1\right]}^{q}\right)^{1/q}\leq \left(E\left(\tau_{1,1}\right)\right)^{-1/2}\cdot \left\Vert \sigma\right\Vert _{2}.\label{rev4}
\end{equation}
For $t\in\left[t_{\ell},t_{\ell+1}\right]$ we have
$$ \left|\overline{X}_{n_{k}}\left(t\right)-\widehat{X}_{k}^{\dagger}\left(t\right)\right| = \left|\sigma_{\ell}\cdot\left(W^{\ell}\left(t\right)-\widehat{W}_{m_{\ell,k}}^{\ell}\left(t\right)\right)\right|.$$
Thus 
\begin{equation}
\left\Vert \overline{X}_{n_{k}}-\widehat{X}_{k}^{\dagger}\right\Vert _{L_{\infty}\left[0,1\right]} =  \max_{0\leq\ell\leq n_{k}-1}\left(\left|\sigma_{\ell}\right|\cdot\sup_{t_{\ell}\leq t\leq t_{\ell+1}}\left|W^{\ell}\left(t\right)-\widehat{W}_{m_{\ell,k}}^{\ell}\left(t\right)\right|\right).\label{rev5} 
\end{equation}
Then, the estimate (\ref{rev4}) is a direct consequence of (\ref{Eqn8}) together with the equation (\ref{rev5}), (\ref{Wl1}) and Lemma  \ref{lemma2}. 

\textbf{Proof of Theorem \ref{theorem1-2}}
The upper bound in (\ref{appro2}) is a direct
consequence from (\ref{statement2}). For establishing the lower bound it suffices to study the case $q=1$. For $k\in \mathbb{N}$ take $n_{k}\in \mathbb{N}$ such that
\begin{equation}\label{T0} 
\lim_{k\to \infty}\frac{n_{k}}{k}=0\quad\mathrm{and}\quad  \lim_{k\to \infty}\frac{\sqrt{k}}{n_{k}}=0.
\end{equation}
Let $$ \bar{t}_{\ell}=\frac{\ell}{n_{k}}$$ for $\ell=0,\ldots,n_{k},$ and consider the process $\overline{X}_{n_{k}}$ for this discretization; see (\ref{milst}).
At first, by Minkowski's inequality and  (\ref{Eqn7}) we have for every approximation $\widehat{X}_{k} \in \mathfrak{N}_{k,r}$
\begin{equation}\label{T1}
E\left\Vert X-\widehat{X}_{k}\right\Vert _{L_{\infty}\left[0,1\right]}\geq E\left\Vert \overline{X}_{n_{k}}-\widehat{X}_{k}\right\Vert _{L_{\infty}\left[0,1\right]}-C/n_{k}.
\end{equation}
For a fixed  $\omega\in\Omega$ let $\widehat{X}_{k}\left(\omega\right) \in  \Phi_{k,r}$ be given by $$\widehat{X}_{k}\left(\omega\right) =\sum_{j=1}^{k}\mathbf{1}_{\left]t_{j-1},\; t_{j}\right]}\cdot\pi_{j}.$$  Let 
$$ D\left(\widehat{X}_{k}\left(\omega\right)\right) = \{t_{j} : j=0,\cdots,k\}$$ 
be the set of knots used by $\widehat{X}_{k}\left(\omega\right)$, and put 
$$ d_{\ell-1}= \sharp \left(D\left(\widehat{X}_{k}\left(\omega\right)\right)\cap \left]\bar{t}_{\ell-1},\bar{t}_{\ell}\right[\right),\qquad \ell =1,\cdots,n_{k}.$$
We refine the corresponding partition to a partition $$0=\tilde{t}_{0}<\cdots<\tilde{t}_{\tilde{k}}=1,$$that contains all the points $\ell/n_{k}$, and we define the polynomials $\tilde{\pi}_{j} \in \Pi_{r}$ by $$ \widehat{X}_{k}\left(\omega\right) =\sum_{j=1}^{\tilde{k}}\mathbf{1}_{\left]\tilde{t}_{j-1},\;\tilde{t}_{j}\right]}\cdot\tilde{\pi}_{j}.$$ Furthermore, for $t \in \left]\tilde{t}_{j-1}, \tilde{t}_{j}\right]\subseteq \left]\bar{t}_{\ell-1}, \bar{t}_{\ell}\right]$ we define 
$\bar{\pi}_{j} \in \Pi_{r}$ by 
$$  \tilde{\pi}_{j}\left(t\right)= \overline{X}_{n_{k}}\left(\bar{t}_{\ell-1},\omega\right)+ a \left(\bar{t}_{\ell-1},\overline{X}_{n_{k}}\left(\bar{t}_{\ell-1},\omega\right)\right)\cdot\left(t-\bar{t}_{\ell-1}\right)+\sigma_{\ell-1} \cdot\left( \bar{\pi}_{j}\left(t\right)-W\left(\bar{t}_{\ell-1},\omega\right)\right).$$ 
Put $$ \bar{f}=\sum_{j=1}^{\tilde{k}}\mathbf{1}_{\left]\tilde{t}_{j-1},\; \tilde{t}_{j}\right]}\cdot \bar{\pi}_{j}.$$  
Then, we have $$\left\Vert \overline{X}_{n_{k}}\left(\omega\right)-\widehat{X}_{k}\left(\omega\right)\right\Vert _{L_{\infty}\left[0,1\right]}\geq \max_{1\leq \ell \leq n_{k}}\left(\left|\sigma_{\ell-1}\right| \cdot \sup_{\bar{t}_{\ell-1}<t \leq \bar{t}_{\ell}}\left|W\left(t,\omega\right)-\bar{f}\left(t\right)\right|\right).$$  
Note that there exists an $\ell_{0} = \ell_{0}\left(\omega\right)\in \{1,\cdots,n_{k}\}$, so that 
 $$d_{\ell_{0}-1}\leq m_{\ell_{0}-1,k} + 2 .$$  To see this, suppose that $$ d_{\ell-1} > m_{\ell-1,k} + 2 \quad \forall \ell \in \{1,\cdots,n_{k}\}.$$ This implies
$$ k\geq \sum_{\ell= 1}^{n_{k}}d_{\ell-1} > \sum_{\ell=1}^{n_{k}}\left(m_{\ell-1,k} + 2\right) \geq k-n_{k} + 2n_{k} =k+n_{k},$$ which leads to a contradiction.
Hence we a.s. have
\begin{equation}\label{T2}
\sup_{\bar{t}_{\ell_{0}-1}<t \leq \bar{t}_{\ell_{0}}}\left|W\left(t\right)-\bar{f}\left(t\right)\right|\geq \inf_{\varphi\in\Phi_{d_{\ell_{0}-1},r}} \left\Vert W-\varphi\right\Vert_{L_{\infty}\left[\bar{t}_{\ell_{0}-1},\bar{t}_{\ell_{0}}\right]}=\gamma_{d_{\ell_{0}-1}}^{\ell_{0}-1}\geq \gamma_{m_{\ell_{0}-1,k}+2}^{\ell_{0}-1}
\end{equation}  
by (\ref{Wl1}). Hence we use (\ref{rev2}), (\ref{eq6N}), (\ref{T1}) and (\ref{T2}) to obtain  $$\liminf_{k\to\infty}\sqrt{k}\cdot E\left\Vert X-\widehat{X}_{k}\right\Vert _{L_{\infty}\left[0,1\right]}\geq \left(E\tau_{1,1}\right)^{-1/2}\cdot\left\Vert \sigma\right\Vert _{2} $$ by Fatou's Lemma. This completes the proof of Theorem \ref{theorem1-2}.\\\\
\textbf{Acknowledgment}
We would like to thank the referees for numerous useful comments, which led to an improvement of the presentation.
 
\bibliographystyle{plain}
\bibliography{references}

\end{document}